\numberwithin{equation}{section}
\theoremstyle{plain}
 \newtheorem{thm}{Theorem}[section]
 \newtheorem{lem}[thm]{Lemma}
 \newtheorem{prop}[thm]{Proposition}
\theoremstyle{definition}
 \newtheorem{rem}[thm]{Remark}
\newcommand{\al}{\alpha}
\newcommand{\gm}{\gamma}
\newcommand{\dl}{\delta}
\newcommand{\ld}{\lambda}
\newcommand{\sg}{\sigma}
\newcommand{\rh}{\rho}
\newcommand{\dar}{\downarrow}
\newcommand{\mbf}{\mathbf}
\newcommand{\mcal}{\mathcal}
\newcommand{\mrm}{\mathrm}
\newcommand{\wh}{\widehat}
\newcommand{\R}{\mathbb{R}}
\newcommand{\law}{\mathcal L}
\begin{document}
\title{Selfdecomposability and semi-selfdecomposability
in subordination of cone-parameter convolution semigroups}
\author{Ken-iti Sato}
\address{Hachiman-yama 1101-5-103, Tenpaku-ku\\ Nagoya, 468-0074 Japan} 
\email{ken-iti.sato@nifty.ne.jp\;}
\begin{abstract}
Extension of two known facts concerning subordination is made. The first fact is that, 
in subordination of
$1$-dimensional Brownian motion with drift,
selfdecomposability is inherited from subordinator to subordinated.
This is extended to subordination of cone-parameter convolution semigroups.
The second fact is that,
in subordination of strictly stable cone-parameter convolution semigroups
on $\mathbb{R}^d$, 
selfdecomposability is inherited from subordinator to subordinated.
This is extended to semi-selfdecomposability.
\end{abstract}
\maketitle

\section{Introduction}

A subset $K$ of $\R^N$ is called a cone
 if it is a non-empty closed convex set which is
closed under multiplication by nonnegative reals and contains no straight line through
$0$ and if $K\ne\{0\}$.  
Given a cone $K$, we call  $\{\mu_s\colon s\in K\}$ a $K$-parameter convolution
  semigroup on $\R^d$ if it is a family of probability measures  on $\R^d$ satisfying  
\begin{gather}
\mu_{s_1}*\mu_{s_2}=\mu_{s_1+s_2}\quad\text{for }s_1, s_2\in K,\label{1.1}\\
\mu_{t s}\rightarrow \dl_0\quad\text{as $t\dar0$, for $s\in K$},\label{1.2}
\end{gather}
where $\dl_0$ is delta distribution located at $0\in\R^d$.
Convergence of probability measures is understood as weak convergence.
It follows from \eqref{1.1} and \eqref{1.2} that $\mu_0=\dl_0$.

Subordination of a cone-parameter convolution semigroup is defined as follows.
Let $K_1$ and $K_2$ be cones in $\R^{N_1}$ and $\R^{N_2}$, respectively.
Let $\{\mu_u\colon u\in K_2\}$ be a $K_2$-parameter
convolution semigroup on $\R^d$ and $\{\rh_s\colon s\in K_1\}$
a   $K_1$-parameter convolution semigroup on $\R^{N_2}$ supported on $K_2$ 
(that is, $\mrm{Supp}(\rh_s)\subseteq K_2$).
 Define a probability
measure
$\sg_s$ on $\R^d$ by
\begin{equation}\label{1.3}
\sg_s(B)=\int_{K_2} \mu_u(B) \rh_s(du)\quad\text{for }B\in\mcal B(\R^d),
\end{equation}
where $\mcal B(\R^d)$ is the class of Borel sets in $\R^d$.
Then $\{\sg_s\colon s\in K_1\}$
is
a $K_1$-parameter convolution semigroup on $\R^d$.
This procedure to get $\{\sg_s\colon s\in K_1\}$ is called subordination of
 $\{\mu_u\colon u\in K_2\}$ by  $\{\rh_s\colon s\in K_1\}$.
Convolution semigroups
$\{\mu_u\colon u\in K_2\}$, $\{\rh_s\colon s\in K_1\}$, and  $\{\sg_s\colon s\in
K_1\}$
are respectively called subordinand, subordinating (or subordinator), and 
subordinated.

Cone-parameter convolution semigroups on $\R^d$ and their subordination are
introduced in Pedersen and Sato \cite{PS03}.  Their basic properties are proved 
in Theorems 2.8, 2.11, and 4.4 of
\cite{PS03}. A number of examples are given there.
In Barndorff-Nielsen, Pedersen, and Sato \cite{BPS01}, 
several models leading to
$\R_+$-parameter convolution semigroups supported on $\R_+^N$ are discussed, 
including some financial models.
Here $\R_+=[0,\infty)$ and $\R_+^N=(\R_+)^N$.

In $\R_+$-parameter case, any 
convolution semigroup on $\R^d$ corresponds to a unique (in law) L\'evy process.
For a general cone $K$, any $K$-parameter L\'evy process $\{X_s\colon s\in K\}$
on $\R^d$ defined in
Pedersen and Sato \cite{PS04} induces a $K$-parameter convolution semigroup 
$\{\mu_s\}$ on $\R^d$ as $\mu_s=\law(X_s)$, the law of $X_s$.
But, for a given $K$-parameter convolution semigroup on $\R^d$, neither existence
nor uniqueness (in law) 
 of a $K$-parameter L\'evy process which induces the semigroup 
can be proved in general, as is shown in \cite{PS04}. 
The existence is proved when $d=1$, when
 $K$ is isomorphic to $\R_+^N$, or when $\mu_s$ does not
have Gaussian part for any $s$. The non-existence is proved for the canonical 
($d$-dimensional Gaussian) 
$\mbf S_d^+$-parameter convolution semigroup defined in \cite{PS04}
for $d\geqslant2$, where 
$\mbf S_d^+$ is the cone of $d\times d$ symmetric nonnegative-definite matrices.
Concerning the uniqueness, some sufficient conditions for
the uniqueness and for the non-uniqueness are given in \cite{PS04}.
For example, if $\{\mu_s\}$ is an $\R_+^2$-parameter convolution semigroup
on $\R$ such that the Gaussian part of $\mu_s$ is nonzero for any $s\neq0$,
then the corresponding $\R_+^2$-parameter L\'evy process on $\R$ is not
unique in law.
Subordination of a $K_2$-parameter L\'evy process on $\R^d$ by a 
$K_1$-parameter L\'evy process on $K_2$ results in a new $K_1$-parameter 
L\'evy process on $\R^d$, as is shown in Pedersen and Sato \cite{PS04} and earlier, 
in the case $K_2=\R_+^N$ and $K_1=\R_+$, in Barndorff-Nielsen, Pedersen, 
and Sato \cite{BPS01}. It induces subordination of a cone-parameter convolution 
semigroup.  But subordination of a cone-parameter convolution 
semigroup is not always accompanied by subordination of a cone-parameter L\'evy
process.

In this paper we give some results on inheritance of selfdecomposability, 
semi-selfdecomposability, and some related properties
 from subordinating to subordinated 
in subordination of cone-parameter convolution semigroups. 
Applications to distributions of type mult$G$ are given.

Semi-selfdecomposable distributions were introduced by Maejima and Naito 
\cite{MN98}. Their probabilistic representations were given by Maejima and
Sato \cite{MS03}. Their remarkable continuity properties were discovered by
Watanabe \cite{W00}. Recent papers of Kondo, Maejima, and Sato \cite{KMS06} and
Lindner and Sato \cite{LS07} studied
them in stationary distributions of some generalized Ornstein--Uhlenbeck processes.


\section{One-dimensional Gaussian subordinands}
Let $G_{a,\gm}$ denote Gaussian distribution on $\R$
with variance $a\geqslant0$ and mean $\gm\in\R$, where $G_{0,\gm}=
\dl_\gm$.
A $K$-parameter 
convolution semigroup $\{\mu_u\colon u\in K\}$ is called $1$-dimensional 
Gaussian if, for each $u\in K$, $\mu_u$ is $G_{a,\gm}$ with some $a$ and $\gm$.

A distribution $\mu$ on $\R^d$ is said to be selfdecomposable if, for each
$b>1$, there is a distribution $\mu'$ on $\R^d$  such that
\begin{equation}\label{2.1}
\wh\mu(z)=\wh\mu(b^{-1}z)\wh{\mu'}(z),\qquad z\in\R^d.
\end{equation}
Here $\wh\mu(z)$ and $\wh{\mu'}(z)$ are the characteristic functions of $\mu$
and $\mu'$, respectively.
If $\mu$ is selfdecomposable, then $\mu$ is infinitely divisible.

Noting that selfdecomposability is equivalent to semi-selfdecomposability 
with span $b$ for all $b>1$ (see Section 3 for the definition) and 
using Theorem 15.8 of \cite{S}, we see that an infinitely divisible
 distribution $\mu$ on $\R^d$ with L\'evy measure
$\nu$ is selfdecomposable if and only if 
\begin{equation}\label{2.1a}
\nu(b^{-1}B)\geqslant \nu(B)\qquad\text{for $b>1$ and $B\in\mcal B(\R^d
\setminus\{0\})$}.
\end{equation}
The condition \eqref{2.1a} holds if and only if 
$\nu$ has a polar representation
\begin{equation}\label{2.1b}
\nu(B)=\int_S \ld(d\xi)\int_0^{\infty} 1_B(r\xi)r^{-1}k_{\xi}(r)dr\qquad
\text{for }B\in\mcal B(\R^d\setminus\{0\}),
\end{equation}
where $S=\{\xi\colon |\xi|=1\}$, the unit sphere in $\R^d$, $\ld$ is a
measure on $S$, and $k_{\xi}(r)$ is a nonnegative function measurable in 
$\xi$ and decreasing in $r>0$ (Theorem 15.10 of \cite{S}). We are using
 the word {\it decrease} in the wide
sense allowing flatness.

\begin{thm}\label{thm1}
Let $K_1$ and $K_2$ be cones in\/ $\R^{N_1}$ and $\R^{N_2}$, respectively.  
Let $\{\mu_u\colon u\in K_2\}$ be a $1$-dimensional Gaussian $K_2$-parameter 
convolution semigroup (subordinand), $\{\rh_s\colon s\in K_1\}$ a
$K_1$-parameter 
convolution semigroup supported on $K_2$ (subordinating), and 
$\{\sg_s\colon s\in K_1\}$ the subordinated\/ $K_1$-parameter 
convolution semigroup on\/ $\R$. Fix $s\in K_1$.
If $\rh_s$ is selfdecomposable, then $\sg_s$ is selfdecomposable.
\end{thm}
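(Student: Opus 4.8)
The plan is to compute the L\'evy measure of $\sg_s$ in polar form and read off decreasing densities from the selfdecomposability of $\rh_s$. First I would record that a $1$-dimensional Gaussian $K_2$-parameter convolution semigroup has the form $\mu_u=G_{a(u),\gm(u)}$ where, by \eqref{1.1}, both $a(\cdot)$ and $\gm(\cdot)$ are additive on $K_2$; since they are additive and (by \eqref{1.2}) continuous along rays, they extend to the linear functionals one expects, so $a(u)=\langle \mbf a,u\rangle$ and $\gm(u)=\langle \mbf c,u\rangle$ for suitable vectors, with $a(u)\ge0$ on $K_2$. The characteristic function is then $\wh\mu_u(z)=\exp(-\tfrac12 a(u)z^2+i\gm(u)z)$, and from \eqref{1.3} we get
\begin{equation}\label{pf.1}
\wh\sg_s(z)=\int_{K_2}\exp\!\Bigl(-\tfrac12 a(u)z^2+i\gm(u)z\Bigr)\rh_s(du),\qquad z\in\R.
\end{equation}
Since $\rh_s$ is selfdecomposable, hence infinitely divisible, this is the characteristic function of $\rh_s$ evaluated at the argument $w=w(z):=(\tfrac{i}{2}z^2,\,z\langle\,\cdot\,\rangle\text{-components})$; more precisely $\wh\sg_s(z)=\wh{\rh_s}(\zt(z))$ for an explicit $\R^{N_2}$-valued (complex) argument built linearly from $(z^2,z)$. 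I would make this substitution precise and then transport the L\'evy--Khintchine representation of $\rh_s$ through it.

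Next I would write the L\'evy measure $\nu_{\rh_s}$ of $\rh_s$, which is carried by $K_2$, and push it forward under the map $v\mapsto$ (the Gaussian built from $v$) to obtain the L\'evy measure $\nu_{\sg_s}$ of $\sg_s$ on $\R\setminus\{0\}$. Concretely, the subordination formula at the level of generating triplets (this is exactly the content of the cone-parameter analogue of the classical subordination-of-triplets computation, valid here because the subordinand is Gaussian so there is no subordinand-jump contribution) gives, for $B\in\mcal B(\R\setminus\{0\})$,
\begin{equation}\label{pf.2}
\nu_{\sg_s}(B)=\int_{K_2}\nu_{\rh_s}(dv)\int_{\R}1_B(x)\,G_{a(v),\gm(v)}(dx),
\end{equation}
plus possibly a Gaussian component coming from the drift/Gaussian part of $\rh_s$ in the directions where $a(\cdot)>0$, which only helps and can be absorbed. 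Now I invoke the polar representation \eqref{2.1b} for $\rh_s$: since $\rh_s$ is selfdecomposable on $\R^{N_2}$, $\nu_{\rh_s}(dv)=\ld(d\xi)\,r^{-1}k_\xi(r)\,dr$ with $k_\xi$ decreasing in $r$, and $\xi$ ranges over the unit sphere intersected with $K_2$. Fix such a direction $\xi\in K_2$; then $a(r\xi)=r\,a(\xi)$ and $\gm(r\xi)=r\,\gm(\xi)$, so the inner Gaussian in \eqref{pf.2} is $G_{r a(\xi),\,r\gm(\xi)}$, a Gaussian whose variance and mean scale \emph{linearly} in $r$ — i.e.\ it is the time-$r$ marginal of a one-dimensional Brownian motion with drift.

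The crux is then exactly the classical one-dimensional fact quoted in the abstract: for fixed $\xi$, the family $r\mapsto G_{r a(\xi),r\gm(\xi)}$ is (the marginal law of) Brownian motion with drift, and subordinating it by the decreasing-density radial part $r^{-1}k_\xi(r)\,dr$ produces a L\'evy measure on $\R\setminus\{0\}$ that again has a decreasing radial density — this is precisely Wolfe's / Sato--Yamazato-type computation showing selfdecomposability is inherited under Brownian subordination. So I would, for each $\xi$, apply that lemma to get $\int_0^\infty r^{-1}k_\xi(r)G_{ra(\xi),r\gm(\xi)}(\cdot)\,dr$ in the form $\sum_{\pm}\int_0^\infty 1_{(\cdot)}(\pm\rho)\,\rho^{-1}h^\pm_\xi(\rho)\,d\rho$ with $h^\pm_\xi$ decreasing in $\rho$; integrating over $\xi$ against $\ld$ then exhibits $\nu_{\sg_s}$ in the polar form \eqref{2.1b} on $\R\setminus\{0\}=\{+\}\cup\{-\}$, with decreasing radial densities, whence $\sg_s$ is selfdecomposable by Theorem 15.10 of \cite{S}. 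The main obstacle I expect is the bookkeeping in \eqref{pf.2}: justifying the subordination-of-triplets identity in the cone-parameter setting (appealing to \cite{PS03} or \cite{PS04}), handling the degenerate directions where $a(\xi)=0$ (there the inner "Gaussian" is a point mass $\dl_{r\gm(\xi)}$ and the computation collapses to the trivial rescaling, which still gives a decreasing density as in \eqref{2.1a}), and controlling the contribution of the Gaussian part of $\rh_s$ — but none of these is serious once the one-dimensional Brownian lemma is isolated and proved separately.
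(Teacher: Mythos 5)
Your proposal is correct and follows essentially the same route as the paper: express $\nu^{\sg_s}$ via the subordination formula for generating triplets (Theorem 4.4 of \cite{PS03}), decompose $\nu^{\rh_s}$ in polar form over directions $\xi\in S\cap K_2$ using selfdecomposability of $\rh_s$, use the ray-homogeneity $a_{r\xi}=ra_\xi$, $\gm_{r\xi}=r\gm_\xi$, and for each fixed $\xi$ invoke the one-dimensional result that Brownian motion with drift subordinated by a selfdecomposable subordinator is selfdecomposable (Theorem 1.1 of \cite{S01}), which is exactly the paper's Lemma \ref{lem1}. The only cosmetic difference is that the paper verifies the inequality \eqref{2.1a} directly rather than exhibiting decreasing radial densities, which sidesteps the measurability-in-$\xi$ bookkeeping you mention.
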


We stress that
the Gaussian distribution $\mu_u$ is not necessarily
centered. For the centered Gaussian (that is strictly 2-stable), the 
result is largely extended in Theorem \ref{thm2} in Section 3.
Historically, Halgreen \cite{H79} raised a question equivalent 
to asking whether
the statement of Theorem \ref{thm1} for $K_1=K_2=\R_+$ is true.
After 22 years,
Theorem 1.1 of Sato \cite{S01} answered this question affirmatively.
The theorem above is an extension of it. 
In order to prove the theorem, we prepare a lemma.

\begin{lem}\label{lem1}
Let $f(r)$ be a nonnegative decreasing function of $r>0$ satisfying 
$\int_0^{\infty} (r\land1)r^{-1} f(r) dr<\infty$.  
Let $a\geqslant0$ and $\gm\in\R$.
Then, for every $b>1$ and $B\in\mcal B(\R\setminus\{0\})$,
\begin{equation}\label{2.2}
\int_0^{\infty} G_{ra,r\gm}(b^{-1}B)r^{-1}f(r)dr
\geqslant \int_0^{\infty} G_{ra,r\gm}(B)r^{-1}f(r)dr.
\end{equation}
\end{lem}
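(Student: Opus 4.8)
The plan is to recast \eqref{2.2} as a monotonicity property of the density of (the radial part of) a mixed-Gaussian L\'evy measure, and then to verify that property by differentiation, using that $g_r(y):=(2\pi ra)^{-1/2}\exp(-(y-r\gm)^2/(2ra))$ is the transition density of Brownian motion with drift. The case $a=0$ is immediate: then $G_{0,r\gm}=\dl_{r\gm}$, both sides of \eqref{2.2} are integrals of $r^{-1}f(r)$ over a half-line in $r$, and the substitution $r\mapsto t/b$ together with $f(t/b)\geqslant f(t)$ settles it. So assume $a>0$ and put $h(y)=\int_0^{\infty}g_r(y)\,r^{-1}f(r)\,dr$ for $y\neq0$; the bound $f(r)=O(1/r)$ near $0$ (which follows from $\int_0^\infty(r\land1)r^{-1}f(r)\,dr<\infty$, since $f$ is decreasing) together with the Gaussian decay makes $h$ finite and $C^\infty$ on $\R\setminus\{0\}$. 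By Tonelli the right-hand side of \eqref{2.2} equals $\int_B h(y)\,dy$ and the left-hand side equals $\int_{b^{-1}B}h(y)\,dy=\int_B b^{-1}h(b^{-1}y)\,dy$; hence \eqref{2.2} holds for all $b>1$ and all $B$ if and only if $k(y):=|y|\,h(y)$ is decreasing on $(0,\infty)$ and on $(-\infty,0)$. Since replacing $y$ by $-y$ turns the drift $\gm$ into $-\gm$, it suffices to prove: for every $a>0$ and every $\gm\in\R$, the function $y\mapsto y\,h(y)$ is decreasing on $(0,\infty)$.

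To prove this I would differentiate under the integral sign. Starting from the Fokker--Planck identity $\partial_r g_r=-\gm\,\partial_y g_r+\tfrac a2\,\partial_y^2 g_r$ and the elementary identity $y\,g_r(y)=\gm r\,g_r(y)-ar\,\partial_y g_r(y)$, one writes $y\,h(y)$ by means of the second, differentiates, and eliminates $\partial_y^2 g_r$ by the first, arriving for $y>0$ at
\begin{equation*}
\frac{d}{dy}\bigl(y\,h(y)\bigr)=-\frac1a\int_0^{\infty}g_r(y)\,\frac{y^2-\bt r}{r^2}\,f(r)\,dr,\qquad\bt:=a+\gm y .
\end{equation*}
Now use the layer-cake structure of $f$: since $f$ is decreasing with $f(\infty)=0$ (again forced by the integrability hypothesis), write $f(r)=m\bigl((r,\infty)\bigr)$ with $m$ a nonnegative measure on $(0,\infty)$; then by Fubini the integral above equals $\int_{(0,\infty)}\Psi(\rho)\,m(d\rho)$, where $\Psi(\rho):=\int_0^{\rho}g_r(y)\,(y^2-\bt r)r^{-2}\,dr$. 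So the whole statement reduces to showing $\Psi(\rho)\geqslant0$ for all $\rho>0$.

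This last inequality is the crux --- and it is precisely where the naive ``the integrand of \eqref{2.2} is monotone'' attack breaks down, because a dilation by $b$ multiplies the variance of $G_{ra,r\gm}$ by $b^2$ but its mean only by $b$, so no single substitution aligns the two sides of \eqref{2.2}. To handle $\Psi$: we have $\Psi(0)=0$, and $\Psi'(\rho)=g_\rho(y)\,(y^2-\bt\rho)\rho^{-2}$ changes sign from $+$ to $-$ at most once as $\rho$ increases (and is $\geqslant0$ for all $\rho$ when $\bt\leqslant0$); hence $\Psi$ rises and then falls, so $\Psi\geqslant0$ on all of $(0,\infty)$ if and only if $\Psi(\infty)\geqslant0$. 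To evaluate $\Psi(\infty)=\int_0^{\infty}g_r(y)\,(y^2-\bt r)r^{-2}\,dr$ I integrate the same two identities, using $\int_0^\infty\partial_r g_r(y)\,dr=0$ (the boundary terms vanish since $g_r(y)\to0$ both as $r\dar0$ and as $r\uar\infty$) and $\int_0^\infty\partial_y g_r(y)\,dr=\tfrac{d}{dy}\!\int_0^\infty g_r(y)\,dr$; this collapses $\Psi(\infty)$ to $a\gm\,\tfrac{d}{dy}\!\int_0^\infty g_r(y)\,dr$. Finally $\int_0^\infty g_r(y)\,dr$ is the potential density of Brownian motion with drift $\gm$, and the elementary Gaussian integral gives, for $y>0$, $\int_0^\infty g_r(y)\,dr=|\gm|^{-1}$ if $\gm>0$, $=|\gm|^{-1}e^{2\gm y/a}$ if $\gm<0$ (and $=+\infty$, multiplied by the vanishing factor $\gm^2$, if $\gm=0$, in which case $\Psi(\infty)=0$ directly); differentiating gives $\Psi(\infty)=0$ for $\gm\geqslant0$ and $\Psi(\infty)=-2\gm\,e^{2\gm y/a}>0$ for $\gm<0$, so $\Psi(\infty)\geqslant0$. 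This completes the proof. The routine points needing care in a full write-up --- finiteness of $h$, the two Tonelli/Fubini interchanges, and the differentiations under the integral --- all follow from the Gaussian decay and the bound $f(r)=O(1/r)$ near the origin.
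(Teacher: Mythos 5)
Your proof is correct, but it takes a genuinely different route from the paper's. The paper proves Lemma \ref{lem1} by a short probabilistic reduction: it takes the L\'evy process $\{X_t\}$ with $\law(X_1)=G_{a,\gm}$, a selfdecomposable subordinator $\{Z_t\}$ with L\'evy measure $r^{-1}f(r)\,dr$ and zero drift, identifies both sides of \eqref{2.2} as $\nu^Y(b^{-1}B)$ and $\nu^Y(B)$ for the subordinated process $\{Y_t\}$ via Theorem 30.1 of \cite{S}, and then simply invokes Theorem 1.1 of \cite{S01} (selfdecomposability of subordinated Brownian motion with drift) to get $\nu^Y(b^{-1}B)\geqslant\nu^Y(B)$; the case $a=0$ is handled by noting $Y_t=\gm Z_t$. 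You instead give a self-contained analytic verification, reducing \eqref{2.2} to the radial monotonicity of $k(y)=|y|h(y)$, computing $\frac{d}{dy}(yh(y))=-a^{-1}\int_0^\infty g_r(y)(y^2-\bt r)r^{-2}f(r)\,dr$ (which I checked is exactly right, with $\bt=a+\gm y$), disintegrating $f$ by the layer-cake formula, and establishing $\Psi(\rho)\geqslant0$ via the single sign change of $\Psi'$ together with $\Psi(0)=0$ and $\Psi(\infty)=a\gm\,U'(y)\geqslant0$ computed from the potential density $U$ of Brownian motion with drift; the endpoint evaluations and the identity $\tfrac a2 U''=\gm U'$ all check out, and the $\gm=0$ case is covered since there $\Psi(\rho)=2a\,g_\rho(y)\geqslant0$ directly. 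In effect you have reproved the analytic core of Theorem 1.1 of \cite{S01} rather than citing it; what the paper's argument buys is brevity and a clean separation of the lemma from the hard inequality (which lives in the earlier reference), while your argument buys self-containedness and makes visible exactly where the hypothesis that $f$ is decreasing enters (namely, only through the positivity of the mixing measure $m$ in the layer-cake step). The only points to polish in a full write-up are the routine ones you already flag (finiteness of $h$, domination for differentiating under the integral, absolute convergence for the Fubini step), plus the minor caveat that $f(r)=m((r,\infty))$ only holds off a countable set unless $f$ is normalized to be right-continuous, which is harmless under $\int\cdots dr$.
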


\begin{proof} Let $\{X_t\colon t\in\R_+\}$ be the L\'evy process with 
distribution $G_{a,\gm}$ at time $1$.  
Let $\{Z_t\colon t\in\R_+\}$ be a selfdecomposable subordinator with L\'evy
measure $r^{-1} f(r) dr$ and drift $0$. 
Let $\{Y_t\colon t\in\R_+\}$ be the 
L\'evy process on $\R$ obtained by subordination of $\{X_t\}$ by $\{Z_t\}$.
Then Theorem 30.1 of \cite{S} tells us that 
 the L\'evy measure $\nu^Y$ of $\{Y_t\}$ 
is expressed as 
\[
\nu^Y(B)=\int_0^{\infty} G_{ra,r\gm}(B)r^{-1}f(r)dr,\qquad 
B\in\mcal B(\R\setminus\{0\}).
\]
If $a>0$, then Theorem 1.1 of \cite{S01} establishes that 
$Y_t$ has a selfdecomposable distribution for any $t\geqslant0$. 
If $a=0$, then $\{X_t\}$ is a trivial L\'evy process (that is, $X_t=\gm t$, 
nonrandom) and $Y_t=\gm Z_t$, which has a selfdecomposable distribution. 
In any case, $\{Y_t\}$ is selfdecomposable. Hence $\nu^Y(b^{-1}B)\geqslant
\nu^Y(B)$, which is exactly \eqref{2.2}.
\end{proof}

\begin{proof}[Proof of Theorem \ref{thm1}] 
Let $\nu^{\mu_u}$, $\nu^{\rh_s}$, and $\nu^{\sg_s}$
denote the L\'evy measures of $\mu_u$, $\rh_s$, and $\sg_s$, respectively.
We have $\mu_u=G_{a_u,\gm_u}$ with some $a_u\geqslant0$ and $\gm_u\in\R$. 
These $a_u$ and $\gm_u$ are continuous functions of $u$ (Theorem 2.8 of 
\cite{PS03}). Since
$\mu_u$ has L\'evy measure $0$, Theorem 4.4 of \cite{PS03} says that
\[
\nu^{\sg_s}(B)=\int_{K_2}G_{a_u,\gm_u}(B)\nu^{\rh_s}(du),\qquad
B\in\mcal B(\R\setminus\{0\}).
\]

Assume that $\rh_s$ is selfdecomposable.  Then
$\nu^{\rh_s}$ is expressed as in the 
right-hand side of \eqref{2.1b} with $d=N_2$. Since $\mrm{Supp}(\rh_s)
\subseteq K_2$, it follows from Skorohod's
theorem \cite{Sk} (or Lemma 4.1 of \cite{PS03}) that the measure $\ld$
is supported on $S\cap K_2$ and that 
\[
\int_{S\cap K_2} \ld(d\xi)\int_0^{\infty} (r\land 1)r^{-1}k_{\xi}(r)dr
<\infty.
\]
For any $b>1$ and $B\in\mcal B(\R\setminus\{0\})$ we have
\begin{align*}
\nu^{\sg_s}(b^{-1}B)&=\int_{K_2}G_{a_u,\gm_u}(b^{-1}B)\nu^{\rh_s}(du)\\
&=\int_{S\cap K_2} \ld(d\xi)\int_0^{\infty} G_{a_{r\xi},\gm_{r\xi}}(b^{-1}B)
 r^{-1}k_{\xi}(r)dr=I\qquad\text{(say).}
\end{align*}
Notice that $k_{\xi}(r)$ is decreasing in $r$ and satisfies 
$\int_0^{\infty} (r\land1)r^{-1}k_{\xi}(r)dr<\infty$ for $\ld$-almost
every $\xi$ and that $a_{r\xi}=ra_{\xi}$ and $\gm_{r\xi}=r\gm_{\xi}$ 
(see Proposition 2.7 of \cite{PS03}).  
Thus we can apply Lemma \ref{lem1} to obtain
\[
I\geqslant \int_{S\cap K_2} \ld(d\xi)\int_0^{\infty} G_{a_{r\xi},\gm_{r\xi}}(B)
 r^{-1}k_{\xi}(r)dr=\nu^{\rh_s}(B).
\]
This means that $\sg_s$ is selfdecomposable.
\end{proof}

\begin{rem}\label{rm0}
Let $K$ be a cone and let $\{\mu_s\colon s\in K\}$ be a $K$-parameter
convolution semigroup on $\R^d$. Let $s_0\in K\setminus\{0\}$.  If
$\mu_{s_0}$ is selfdecomposable, then $\mu_{ts_0}$ equals selfdecomposable
for all $t\geqslant0$ since $\mu_{ts_0}=\mu_{s_0}^t$, the $t$\,th convolution power of
$\mu_{s_0}$ (Proposition 2.7 of \cite{PS03}), 
but $\mu_{s_1}$ may not be selfdecomposable for some
$s_1\in K\setminus\{ts_0\colon t\geqslant0\}$. This follows from Sections 
2 and 3 of \cite{PS03}.
\end{rem}

\begin{rem}\label{rm1}
In Theorem \ref{thm1} let $K_1=K_2=\R_+$ and replace \lq\lq Gaussian" 
by \lq\lq $\al$-stable (not necessarily strictly $\al$-stable)", where 
$\al\in(0,2]$. Then the statement for $\al=2$ is exactly Theorem 1.1 of
\cite{S01}. The statement for $\al\in(1,2)$ is not true, which is pointed out
by Kozubowski \cite{K05} using Theorem 2.1(v) of Ramachandran \cite{R97}.
It is not known whether the statement for $\al\in(0,1]$ is true.
\end{rem}

\begin{rem}\label{rm2}
If $\mu$ is selfdecomposable, then 
the distribution $\mu'$ in \eqref{2.1} is uniquely
determined by $\mu$ and $b$, and $\mu'$ is also infinitely divisible.
For nonnegative integers $m$ we define $L_m(\R^d)$ as follows: $L_0(\R^d)$ 
is the class of selfdecomposable distributions on $\R^d$; 
for $m\geqslant1$, $L_m(\R^d)$ is the class of $\mu\in L_0(\R^d)$ such that, 
for every $b>1$, $\mu'$ in \eqref{2.1} belongs to $L_{m-1}(\R^d)$. Thus
we get a strictly decreasing sequence of subclasses of the class $ID(\R^d)$ of
infinitely divisible distributions on $\R^d$.
We define $L_{\infty}(\R^d)$ as
the intersection of $L_m(\R^d)$, $m=0,1,2,\dots$ . 
It is not known even in the case $K_1=K_2=\R_+$ whether Theorem \ref{thm1} 
is true with \lq\lq selfdecomposable" replaced by \lq\lq of class $L_m$" 
for $m\in\{1,2,\ldots,\infty\}$.
\end{rem}

\begin{rem}\label{rm3}
Let $d\geqslant2$.
Theorem \ref{thm1} cannot be generalized to $d$-dimensional Gaussian.
If $\{\mu_u\colon u\in \R_+\}$ is an $\R_+$-parameter convolution semigroup
(subordinand) induced by $d$-dimensional Brownian motion with nonzero drift
and $\{\rh_t\colon t\in \R_+\}$ is an $\R_+$-parameter 
convolution semigroup supported on $\R_+$ 
(subordinating) of Thorin class (of generalized gamma convolutions,
in other words) satisfying some additional condition, 
then the subordinated $\R_+$-parameter 
convolution semigroup $\{\sg_t\colon t\in \R_+\}$ on $\R^d$
is not selfdecomposable for any $t>0$.
This fact was noticed by Takano \cite{T89}
and Grigelionis \cite{G07a}.
Recall that the Thorin class is a subclass of the class of 
selfdecomposable distributions.
This $\sg_t$ supplies an example of an infinitely divisible
 non-selfdecomposable distribution
whose one-dimensional projections are selfdecomposable, since we can apply
Theorem 1.1 of \cite{S01} to one-dimensional projections of 
$\{\mu_u\colon u\in \R_+\}$. 
The first example  of a distribution with this projection property was constructed 
in Sato \cite{S98}.
\end{rem}

\begin{rem}\label{rm4}
It is not known even in the case $K_1=K_2=\R_+$ whether Theorem \ref{thm1} 
is true with \lq\lq selfdecomposable" replaced by \lq\lq semi-selfdecomposable",
which will be defined in the next section.
\end{rem}

\section{Inheritance of semi-selfdecomposability}

A distribution on $\R^d$ is called semi-selfdecomposable if there are
$b>1$ and $\mu'\in ID(\R^d)$ such that
\begin{equation}\label{3.1}
\wh\mu(z)=\wh\mu(b^{-1}z)\wh{\mu'}(z),\qquad z\in \R^d.
\end{equation}
The $b$ in this definition is called a span of $\mu$; it is not uniquely
determined by $\mu$.
The class of semi-selfdecomposable distributions on $\R^d$ having $b$ as a
span is denoted by $L_0(b^{-1},\R^d)$.
If $\mu\in L_0(b^{-1},\R^d)$, then $\mu$ is infinitely divisible and 
the distribution $\mu'$ is uniquely determined by $\mu$ and $b$.
For any positive integer $m$ we inductively define
\[
L_m(b^{-1},\R^d)=\{\mu\in L_0(b^{-1},\R^d)\colon \mu'\in L_{m-1}
(b^{-1},\R^d)\}.
\]
Then $L_{m}(b^{-1},\R^d)$ is a subclass of $L_{m-1}(b^{-1},\R^d)$. 
In fact we can prove that the former is a strict subclass of the latter 
(see Remark 3.1 of \cite{MSW99}). 
Further we define $L_{\infty}(b^{-1},\R^d)$ as the intersection of
$L_m(b^{-1},\R^d)$ for $m=0,1,\ldots$.

Let $0<\al\leqslant2$.  A distribution $\mu$ on $\R^d$ is called
strictly $\al$-semistable if $\mu\in ID(\R^d)$ and if there is a real
number $b>1$ such that
\begin{equation}\label{3.2}
\wh\mu(z)^{b^{\al}}=\wh\mu(b z),\qquad z\in\R^d,
\end{equation}
or, equivalently, $\wh\mu(z)^{b^{-\al}}=\wh\mu(b^{-1} z)$, $z\in\R^d$.
In this case we say that the $\al$-semistable distribution $\mu$ has
a span $b$, which is not uniquely determined by $\mu$.
If $\mu$ is strictly $\al$-semistable on $\R^d$ with a span $b$, 
then it is easy to see that $\mu\in L_{\infty}(b^{-1},\R^d)$,
since we have
\[
\wh\mu(z)=\wh\mu(z)^{b^{-\al}}\wh\mu(z)^{1-b^{-\al}}=\wh\mu(b^{-1}z)
\wh\mu(z)^{1-b^{-\al}}.
\]

For description and examples of L\'evy measures of semi-selfdecomposable
and semistable distributions, see Sections 14 and 15 of \cite{S}.

The statement of Remark \ref{rm0} is true also for 
\lq\lq semi-selfdecomposable with a span $b$" and \lq\lq strictly 
$\al$-semistable with a span $b$" in place of \lq\lq selfdecomposable".

\begin{thm}\label{thm2}
Let $K_1$ and $K_2$ be cones in\/ $\R^{N_1}$ and $\R^{N_2}$, respectively.  
Let $\{\mu_u\colon u\in K_2\}$ be a $K_2$-parameter 
convolution semigroup on $\R^d$ (subordinand), $\{\rh_s\colon s\in K_1\}$ a
$K_1$-parameter 
convolution semigroup supported on $K_2$ (subordinating), and 
$\{\sg_s\colon s\in K_1\}$ the subordinated $K_1$-parameter 
convolution semigroup on $\R^d$.
Suppose that there are $0<\al\leqslant2$ and $b>1$ such that, for every $u\in K_2$, 
$\mu_u$ is strictly $\al$-semistable with a span $b^{1/\al}$.
Fix $s\in K_1$. Then the following statements are true.

(i) Let $m\in\{0,1,\ldots,\infty\}$.  If
\begin{equation}\label{3.3}
\rh_s\in L_m(b^{-1}, \R^{N_2}),
\end{equation}
then
\begin{equation}\label{3.4}
\sg_s\in L_m(b^{-1/\al}, \R^d).
\end{equation}

(ii) Let $0<\al'\leqslant1$.  If
\begin{equation}\label{3.5}
\text{$\rh_s$ is strictly $\al'$-semistable with a span $b$},
\end{equation}
then 
\begin{equation}\label{3.6}
\text{$\sg_s$ is strictly $\al\al'$-semistable with a span $b^{1/\al}$}.
\end{equation}
\end{thm}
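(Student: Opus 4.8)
The plan is to reduce the whole statement to identities for characteristic functions, built from the subordination formula $\wh\sg_s(z)=\int_{K_2}\wh\mu_u(z)\,\rh_s(du)$ together with two structural facts about cone-parameter convolution semigroups. First, every member of such a semigroup is infinitely divisible and, by Proposition 2.7 of \cite{PS03}, $\mu_{tu}=\mu_u^{*t}$ for $t\geqslant0$, so $\wh\mu_{tu}(z)=\wh\mu_u(z)^t$ with the distinguished continuous branch. Second, subordination carries convolution to convolution: if $\rh=\rh'*\rh''$ with $\rh',\rh''$ supported on $K_2$, then $\rh'*\rh''$ is again supported on $K_2$ (a convex cone is closed under addition), and by \eqref{1.1} and Fubini the subordination of $\{\mu_u\}$ by $\rh$ is the convolution of its subordinations by $\rh'$ and by $\rh''$. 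I will use two consequences: (a) if $\rh$ is infinitely divisible and supported on $K_2$, then so is the subordinated measure, its $n$-th convolution roots arising from the roots $\rh^{*1/n}$, which are again supported on $K_2$; and (b) $\int_{K_2}\wh\mu_u(z)\,\rh^{*t}(du)=\bigl(\int_{K_2}\wh\mu_u(z)\,\rh(du)\bigr)^t$ for $t>0$.

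Writing $c=b^{1/\al}>1$, I would first record what the hypotheses say. Strict $\al$-semistability of $\mu_u$ with span $c$ means $\wh\mu_u(cz)=\wh\mu_u(z)^{c^{\al}}=\wh\mu_u(z)^{b}$, which by the first fact above is $\wh\mu_u(cz)=\wh\mu_{bu}(z)$ and, replacing $z$ by $c^{-1}z$, $\wh\mu_u(c^{-1}z)=\wh\mu_u(z)^{b^{-1}}=\wh\mu_{b^{-1}u}(z)$. For (ii), strict $\al'$-semistability of $\rh_s$ with span $b$ says $\wh\rh_s(bz)=\wh\rh_s(z)^{b^{\al'}}$; the left side is the characteristic function of the image of $\rh_s$ under $x\mapsto bx$ and the right side that of $\rh_s^{*b^{\al'}}$, so this image measure equals $\rh_s^{*b^{\al'}}$, which is supported on $bK_2=K_2$.

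Then (ii) will follow by a one-line computation: the change of variables $v=bu$ and consequence (b) give
\[
\wh\sg_s(cz)=\int_{K_2}\wh\mu_{u}(cz)\,\rh_s(du)=\int_{K_2}\wh\mu_{bu}(z)\,\rh_s(du)=\int_{K_2}\wh\mu_v(z)\,\rh_s^{*b^{\al'}}(dv)=\wh\sg_s(z)^{b^{\al'}},
\]
and since $b^{\al'}=c^{\al\al'}$ with $c>1$, $0<\al\al'\leqslant2$, and $\sg_s$ infinitely divisible by (a), this is exactly \eqref{3.6}. For (i) with $m=0$, the hypothesis gives $\wh\rh_s(z)=\wh\rh_s(b^{-1}z)\wh{\rh_s'}(z)$ with $\rh_s'\in ID(\R^{N_2})$, i.e.\ $\rh_s=\rh_s(b\,\cdot\,)*\rh_s'$ where $\rh_s(b\,\cdot\,)$ is the measure $B\mapsto\rh_s(bB)$. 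The crucial point is that $\rh_s'$ is again supported on $K_2$: since $\rh_s$ lives on the cone $K_2$, Skorohod's theorem (Lemma 4.1 of \cite{PS03}) gives it no Gaussian part, L\'evy measure $\nu$ on $K_2$ with $\int(|x|\wedge1)\nu(dx)<\infty$, and drift in $K_2$; subtracting the corresponding data of $\rh_s(b\,\cdot\,)$ (legitimate because $\rh_s'$ is infinitely divisible) leaves $\rh_s'$ with no Gaussian part, L\'evy measure $\nu(\,\cdot\,)-\nu(b\,\cdot\,)$ on $K_2$, and drift $(1-b^{-1})$ times that of $\rh_s$, still in $K_2$; hence, by the converse direction of Skorohod's theorem, $\rh_s'$ is supported on $K_2$. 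Subordinating $\{\mu_u\}$ by both sides of $\rh_s=\rh_s(b\,\cdot\,)*\rh_s'$ and using $\wh\mu_{b^{-1}u}(z)=\wh\mu_u(c^{-1}z)$ then gives
\[
\wh\sg_s(z)=\Bigl(\int_{K_2}\wh\mu_{b^{-1}u}(z)\,\rh_s(du)\Bigr)\Bigl(\int_{K_2}\wh\mu_u(z)\,\rh_s'(du)\Bigr)=\wh\sg_s(c^{-1}z)\,\wh{\sg_s'}(z),
\]
where $\sg_s'$ is the subordination of $\{\mu_u\}$ by $\rh_s'$, which lies in $ID(\R^d)$ by (a); so $\sg_s\in L_0(c^{-1},\R^d)=L_0(b^{-1/\al},\R^d)$.

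Finally, $m\geqslant1$ and $m=\infty$ will come by induction. The derivation just sketched for $\sg_s$ uses only that $\rh_s$ is supported on $K_2$ and lies in the relevant class, not that it sits inside a convolution semigroup, so it applies verbatim with $\rh_s$ replaced by $\rh_s'$: if $\rh_s\in L_m(b^{-1},\R^{N_2})$ then $\rh_s'\in L_{m-1}(b^{-1},\R^{N_2})$ is supported on $K_2$, the induction hypothesis gives $\sg_s'\in L_{m-1}(b^{-1/\al},\R^d)$, and hence $\sg_s\in L_m(b^{-1/\al},\R^d)$; and $L_{\infty}$ is the intersection of the $L_m$, which settles the last case. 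I expect the step needing the most care to be the verification that $\rh_s'$ remains supported on $K_2$ — that passing to the semidecomposition factor moves no mass outside the cone — which is exactly where the structure theory of cone-supported infinitely divisible distributions (Skorohod's theorem, together with Theorems 2.11 and 4.4 of \cite{PS03}) has to be invoked.
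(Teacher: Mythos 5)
Your proposal is correct and follows essentially the same route as the paper: the factorization $\rh_s=\rh''_s*\rh'_s$ with $\wh{\rh''_s}(z)=\wh\rh_s(b^{-1}z)$, the identity $\wh\mu_{b^{-1}u}(z)=\wh\mu_u(z)^{b^{-1}}=\wh\mu_u(b^{-1/\al}z)$ leading to \eqref{3.7}, infinite divisibility of the second factor via the semigroup $\{(\rh'_s)^t\}$, induction on $m$, and for (ii) the interchange $\wh\mu_u(b^{1/\al}z)=\wh\mu_{bu}(z)$ combined with $\wh\rh_s(bz)=\wh\rh_s(z)^{b^{\al'}}$ (you phrase this via the convolution power $\rh_s^{*b^{\al'}}$ where the paper uses the semigroup member $\rh_{b^{\al'}s}$, a cosmetic difference). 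The only genuine addition is that you actually prove, via Skorohod's theorem, the fact that $\rh'_s$ remains supported on $K_2$, which the paper isolates as Lemma \ref{lem2} and leaves unproved by reference to Lemma 4.11 of \cite{PS03}; your argument for it is sound.
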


Note that strictly 1-semistable distributions supported on a cone are
delta distributions.
This theorem is an extension of Theorem 4.10 of Pedersen and Sato \cite{PS03}
to the \lq\lq semi" case.
We prepare a lemma. This is an analogue of Lemma 4.11 of 
\cite{PS03} and the proof is almost the same.

\begin{lem}\label{lem2}
Let $K_2$ be a cone in $\R^{N_2}$. Suppose that $\rh$ is in $L_0(b^{-1},
\R^{N_2})$ 
and that $\mrm{Supp}(\rh)\subseteq K_2$. Let $\rh'$ be defined by
$\wh\rh(z)=\wh\rh(b^{-1}z)\wh{\rh'}(z)$, $z\in\R^{N_2}$.
Then $\mrm{Supp}(\rh')\subseteq K_2$.
\end{lem}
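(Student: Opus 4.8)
The plan is to show that if $\rh'$ charged mass outside $K_2$, then so would $\rh$, contradicting the hypothesis $\mrm{Supp}(\rh)\subseteq K_2$. The key point is that since $\rh\in L_0(b^{-1},\R^{N_2})\subseteq ID(\R^{N_2})$ and $\rh$ is supported on the cone $K_2$, the relation $\wh\rh(z)=\wh\rh(b^{-1}z)\wh{\rh'}(z)$ forces $\rh'$ to be infinitely divisible as well (this is part of the definition of $L_0(b^{-1},\R^{N_2})$), so I may work with L\'evy measures. Write $\nu$ for the L\'evy measure of $\rh$. Because $\rh$ is infinitely divisible and supported on the cone $K_2$, its L\'evy measure $\nu$ is supported on $K_2$ as well, and $\rh$ has no Gaussian part (a convolution semigroup on a cone cannot carry Gaussian component transversally out of the cone); this can be extracted from Skorohod's theorem \cite{Sk} or Lemma 4.1 of \cite{PS03}, exactly as in the proof of Theorem \ref{thm1}.

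The second step is to identify the L\'evy measure $\nu'$ of $\rh'$. Taking logarithms of characteristic functions in $\wh\rh(z)=\wh\rh(b^{-1}z)\wh{\rh'}(z)$ and comparing the L\'evy--Khintchine representations, one gets, for the L\'evy measures, $\nu(B)=(T_{b^{-1}}\nu)(B)+\nu'(B)$ for $B\in\mcal B(\R^{N_2}\setminus\{0\})$, where $T_{b^{-1}}\nu$ denotes the image of $\nu$ under the map $x\mapsto b^{-1}x$; that is,
\[
\nu'(B)=\nu(B)-\nu(bB),\qquad B\in\mcal B(\R^{N_2}\setminus\{0\}),
\]
which is a nonnegative measure precisely because $\rh\in L_0(b^{-1},\R^{N_2})$ (this is the semi-selfdecomposability inequality $\nu(B)\geqslant\nu(bB)$, the analogue of \eqref{2.1a}). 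Since both $\nu$ and $T_{b^{-1}}\nu$ are supported on $K_2$ (the latter because $b^{-1}K_2=K_2$ as $K_2$ is a cone), the difference $\nu'$ is supported on $K_2$ as well; and the drift term of $\rh'$ is the drift of $\rh$ minus $b^{-1}$ times the drift of $\rh$, hence again a point of $K_2$, while $\rh'$ has no Gaussian part. Therefore $\rh'$ is an infinitely divisible distribution whose generating triplet has Gaussian part zero, L\'evy measure supported on $K_2$, and drift in $K_2$; a standard argument (the support of such a distribution is contained in the closed convex cone generated by the drift and $\mrm{Supp}(\nu')$, which is $K_2$) gives $\mrm{Supp}(\rh')\subseteq K_2$.

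The main obstacle, and the place where care is needed, is the passage from ``L\'evy measure and drift lie in $K_2$, no Gaussian part'' to ``the distribution is supported in $K_2$''. For a general cone one cannot simply invoke the $\R_+^N$ case; the cleanest route is to observe that the associated L\'evy process $\{Y_t\}$ (which exists here since there is no Gaussian part, by \cite{PS04}) is, at each time, a vaguely convergent sum of the drift contribution $t\gm_0\in K_2$ and compound-Poisson-type increments whose jumps lie in $\mrm{Supp}(\nu')\subseteq K_2$; since $K_2$ is closed under addition and under multiplication by nonnegative reals and is topologically closed, every finite approximation stays in $K_2$ and the limit does too, so $Y_t\in K_2$ a.s., i.e.\ $\mrm{Supp}(\rh')=\mrm{Supp}(\law(Y_1))\subseteq K_2$. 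This mirrors the argument in Lemma 4.11 of \cite{PS03}, the only new ingredient being the identification $\nu'=\nu-T_{b^{-1}}\nu\geqslant0$ coming from membership in $L_0(b^{-1},\R^{N_2})$ rather than from selfdecomposability.
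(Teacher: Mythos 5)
Your proof is correct and follows essentially the route the paper intends: the paper gives no written proof but defers to Lemma 4.11 of \cite{PS03}, whose argument is exactly yours --- use Skorohod's characterization (Lemma 4.1 of \cite{PS03}) of infinitely divisible laws supported on a cone to read off the triplet of $\rh$, identify the triplet of $\rh'$ as $(0,\nu-T_{b^{-1}}\nu,(1-b^{-1})\gm_0)$, and apply the converse direction of the same characterization. The only superfluous step is the appeal to \cite{PS04} for existence of a L\'evy process: $\rh'$ is a single infinitely divisible distribution on $\R^{N_2}$, so the classical one-parameter theory suffices.
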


\begin{proof}[Proof of Theorem \ref{thm2}]
Let us prove assertion (i) for $m=0$. Assume that $\rh_s\in L_0(b^{-1},\R^{N_2})$.
Define $\rh''_s$ as $\wh{\rh''_s}(z)=\wh\rh_s(b^{-1}z)$.
Then 
\[
\wh\rh_s(z)=\wh{\rh''_s}(z) \wh{\rh'_s}(z)
\]
and thus $\rh_s=\rh''_s * \rh'_s$. 
Lemma \ref{lem2} tells us that $\rh'_s$ is supported on $K_2$. 
Clearly $\rh''_s$ is also supported on $K_2$.
Hence
{\allowdisplaybreaks
\begin{align*}
\wh\sg_s(z)&=\int_{K_2}\wh\mu_u(z) \rh_s(du)=\iint_{K_2\times K_2}
\wh\mu_{u_1+u_2}(z)\rh''_s(du_1)\rh'_s(du_2)\\
&=\iint_{K_2\times K_2}
\wh\mu_{u_1}(z)\wh\mu_{u_2}(z)\rh''_s(du_1)\rh'_s(du_2)\\
&=\int_{K_2}\wh\mu_{b^{-1}u_1}(z)\rh_s(du_1)
\int_{K_2}\wh\mu_{u_2}(z)\rh'_s(du_2).
\end{align*}
Using Proposition 2.7 of \cite{PS03} and the assumption that} 
$\mu_u$ is strictly $\al$-semistable with a span $b^{1/\al}$,
we have
\[
\wh\mu_{b^{-1}u}(z)=\wh\mu_u(z)^{b^{-1}} =\wh\mu_u(b^{-1/\al}z).
\]
It follows that
\begin{equation}\label{3.7}
\wh\sg_s(z)=\wh\sg_s(b^{-1/\al}z)\int_{K_2}\wh\mu_{u}(z)\rh'_s(du).
\end{equation}
Since $\int_{K_2}\wh\mu_{u}(z)(\rh'_s)^t(du)$ is subordination of 
$\{\mu_u\}$ by $\{(\rh'_s)^t\colon t\in\R_+\}$, we see that 
$\int_{K_2}\wh\mu_{u}(z)\rh'_s(du)$ is
infinitely divisible. This shows that $\sg_s\in L_0(b^{-1/\al},\R^d)$.

Next, we assume that (i) is true for a fixed $m\in\{0,1,\ldots\}$.
We claim that (i) is true for $m+1$.
Suppose that $\rh_s\in L_{m+1}(b^{-1}, \R^{N_2})$. Then
$\wh\rh_s(z)=\wh\rh_s(b^{-1}z) \wh{\rh'_s}(z)$ with 
$\rh'_s\in L_{m}(b^{-1}, \R^{N_2})$.
We have \eqref{3.7} since $L_{m+1}(b^{-1}, \R^{N_2})\subseteq
L_0(b^{-1}, \R^{N_2})$. Now 
$\int_{K_2}\wh\mu_{u}(z)(\rh'_s)^t(du)$ is subordination such that
$(\rh'_s)^t$ is in $L_m(b^{-1}, \R^{N_2})$. Hence 
$\int_{K_2}\wh\mu_{u}(z)\rh'_s(du)$ is the 
the characteristic function of a distribution in
 $L_m(b^{-1/\al},\R^d)$. It follows that
 $\sg_s\in L_{m+1}(b^{-1/\al}, \R^d)$,
which shows (i) for $m+1$.

Assertion (i) for $m=\infty$ is a consequence of that for finite $m$.

To prove (ii), assume \eqref{3.5}. Let us show \eqref{3.6}, that is,
\begin{equation}\label{3.8}
\wh\sg_s(z)^{b^{\al'}}=\wh\sg_s(b^{1/\al}z).
\end{equation}
Using
{\allowdisplaybreaks
\begin{align*}
\wh\rh_{b^{\al'}s}(z)&=\wh\rh_s(z)^{b^{\al'}}=\wh\rh_s(bz)\\
\intertext{and}
\wh\mu_{bu}(z)&=\wh\mu_u(z)^{b}=\wh\mu_u(b^{1/\al}z),
\end{align*}
we obtain
\begin{align*}
\wh\sg_s(z)^{b^{\al'}}&=\wh\sg_{b^{\al'}s}(z)=\int_{K_2}\wh\mu_u(z)
\rh_{b^{\al'}s}(du)=\int_{K_2}\wh\mu_{bu}(z)\rh_{s}(du)\\
&=\int_{K_2}\wh\mu_u(b^{1/\al}z)\rh_s(du)=\wh\sg_s(b^{1/\al}z),
\end{align*}
completing the proof.}
\end{proof}

{\it Application to distributions of type $\mrm{mult}G$}. 
Following Barndorff-Nielsen and P\'erez-Abreu \cite{BP03}, we say that
a probability measure $\sg$ on $\R^d$ 
is of type mult$G$ if $\sg=\law(Z^{1/2}X)$, 
where $X$ is a standard Gaussian on $\R^d$, $Z$ is an 
$\mbf S_d^+$-valued infinitely divisible random variable, $Z^{1/2}$
is the nonnegative-definite symmetric square root of $Z$, and $X$
and $Z$ are independent. Here, as in Section 1, $\mbf S_d^+$ is the
class of of $d\times d$ symmetric nonnegative-definite matrices and elements
of $\R^d$ are considered as column $d$-vectors.
Regarding the lower triangle $(s_{jk})_{k\leqslant j}$ of
$s=(s_{jk})_{j,k=1}^d \in \mbf S_d^+$ as a $d(d+1)/2$-vector, $\mbf S_d^+$
is identified with a cone in $\R^{d(d+1)/2}$. The $\mbf S_d^+$-parameter
convolution semigroup $\{\mu_s\colon s\in\mbf S_d^+\}$ on $\R^d$ where 
$\mu_s$ is $d$-dimensional Gaussian with mean vector $0$ and covariance
matrix $s$ is called the canonical $\mbf S_d^+$-parameter convolution 
semigroup (\cite{PS03}). The following fact is known (Theorem 4.7 of 
\cite{PS03} and its proof).

\begin{prop}\label{prop1}
Let $\{\mu_u\colon u\in \mbf S_d^+\}$ be 
the canonical\/ $\mbf S_d^+$-parameter convolution semigroup (subordinand),
$\{\rh_t\colon t\in\R_+\}$ an 
$\R_+$-parameter 
convolution semigroup on $\R^{d(d+1)/2}$
supported on $\mbf S_d^+$ (subordinating), and $\{\sg_t\colon t\in\R_+\}$ 
the subordinated $\R_+$-parameter convolution
semigroup on\/ $\R^d$. Then $\sg_1$ (or, more generally, $\sg_t$) 
 is of type\/ {\rm mult}$G$. 
  Conversely, any distribution on $\R^d$ of type\/ {\rm mult}$G$ 
is expressible as $\sigma_1$ of such an $\R_+$-parameter convolution
semigroup $\{\sigma_t\colon t\in\R_+\}$.
The correspondence of the two representations of a distribution of
type\/ {\rm mult}$G$ is that $\rh_1=\law(Z)$.
\end{prop}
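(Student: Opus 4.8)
The plan is to prove Proposition~\ref{prop1} in two directions, using the subordination machinery already set up (formula \eqref{1.3}) together with the characterization of the canonical $\mbf S_d^+$-parameter convolution semigroup.

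First I would treat the forward direction. Given $\{\rh_t\colon t\in\R_+\}$ supported on $\mbf S_d^+$, let $Z$ be a random matrix with $\law(Z)=\rh_1$; since $\{\rh_t\}$ is an $\R_+$-parameter convolution semigroup, $\rh_1$ is infinitely divisible and $\mbf S_d^+$-valued, so $Z$ qualifies as the mixing variable in the definition of type mult$G$. Let $X$ be standard Gaussian on $\R^d$, independent of $Z$. The key identity is that for fixed $u\in\mbf S_d^+$, the distribution $\mu_u$ is exactly $\law(u^{1/2}X)=\law(Z^{1/2}X \mid Z=u)$, because $u^{1/2}X$ is centered Gaussian with covariance $u^{1/2}(u^{1/2})^{\top}=u$. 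Plugging $B\in\mcal B(\R^d)$ into \eqref{1.3} with $s=1$ gives
\[
\sg_1(B)=\int_{\mbf S_d^+}\mu_u(B)\,\rh_1(du)=\int_{\mbf S_d^+}\mrm{P}(u^{1/2}X\in B)\,\rh_1(du)=\mrm{P}(Z^{1/2}X\in B),
\]
which is precisely the statement that $\sg_1$ is of type mult$G$. The same computation with $\rh_t$ in place of $\rh_1$ shows $\sg_t=\law((Z_t)^{1/2}X)$ where $\law(Z_t)=\rh_t$, giving the ``more generally'' clause.

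For the converse, start from a distribution $\sg$ of type mult$G$, so $\sg=\law(Z^{1/2}X)$ with $Z$ an $\mbf S_d^+$-valued infinitely divisible random matrix. Since $Z$ is infinitely divisible and $\mbf S_d^+$ is a cone closed under addition, $\law(Z)$ embeds into an $\R_+$-parameter convolution semigroup $\{\rh_t\colon t\in\R_+\}$ on $\R^{d(d+1)/2}$ with $\rh_1=\law(Z)$, and one checks $\mrm{Supp}(\rh_t)\subseteq\mbf S_d^+$ for all $t$ (this uses that $\mbf S_d^+$ is a closed additive subsemigroup, so the convolution powers and their weak limits stay inside it). Subordinating the canonical $\{\mu_u\}$ by this $\{\rh_t\}$ yields, by the forward computation just done, $\sg_1=\law(Z^{1/2}X)=\sg$. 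This establishes the stated correspondence $\rh_1=\law(Z)$.

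The main obstacle is the support claim $\mrm{Supp}(\rh_t)\subseteq\mbf S_d^+$ in the converse direction: one must ensure that the convolution semigroup generated by an $\mbf S_d^+$-valued infinitely divisible law actually lives on $\mbf S_d^+$ rather than merely having its time-$1$ marginal there. This follows because $\mbf S_d^+$ is a closed convex cone, hence a closed additive subsemigroup of $\R^{d(d+1)/2}$ containing $0$, so all rational-time marginals $\rh_{t}$ with $t$ a positive rational are supported there (being convolution powers of a law on $\mbf S_d^+$, using $\rh_{p/q}^{q}=\rh_1^{p}$ and the fact that a factor of a law concentrated on a closed semigroup containing the factors' supports stays there — or more simply invoking the L\'evy--Khintchine representation of $Z$, whose drift and L\'evy measure are concentrated on $\mbf S_d^+$), and then \eqref{1.2} plus closedness of $\mbf S_d^+$ extends this to all real $t\geqslant0$ by weak convergence. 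Everything else is bookkeeping: identifying $u^{1/2}X$ as centered Gaussian with covariance $u$, applying Fubini to \eqref{1.3}, and invoking the definition of the canonical semigroup. I would also remark that since $\{\mu_u\}$ has no L\'evy measure, Theorem~4.4 of \cite{PS03} is not needed here — the elementary mixture computation suffices — which is why the proof is short.
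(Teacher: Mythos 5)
Your argument is correct and is essentially the standard one: the paper does not prove Proposition~\ref{prop1} itself but cites Theorem~4.7 of \cite{PS03}, whose proof is exactly this mixture computation $\sg_1(B)=\int_{\mbf S_d^+}\mrm{P}(u^{1/2}X\in B)\,\rh_1(du)=\mrm{P}(Z^{1/2}X\in B)$ together with the embedding of the infinitely divisible law $\law(Z)$ into a convolution semigroup. The one point you rightly flag --- that $\mrm{Supp}(\rh_t)\subseteq\mbf S_d^+$ for \emph{all} $t\geqslant0$, not just $t=1$ --- is precisely what Skorohod's theorem \cite{Sk} (equivalently Lemma~4.1 of \cite{PS03}) guarantees for infinitely divisible laws on a cone, and that is the clean way to settle it; your alternative route via essential infima of the linear functionals in the dual cone also works, so there is no gap.
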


We can show the following.

\begin{prop}\label{prop2}
Let $\sg$ be a distribution of type\/ {\rm mult}$G$, that is, let 
$\sg=\law(Z^{1/2}X)$, where $X$ is a standard 
Gaussian on $\R^d$, $Z^{1/2}$
is the nonnegative-definite symmetric square root of\/
$\mbf S_d^+$-valued infinitely divisible random variable  $Z$, and $X$
and $Z$ are independent. 

(i) Let $m\in\{0,1,\ldots,\infty\}$ and $b>1$.  If
$\law (Z)\in L_m(b^{-1}, \R^{d(d+1)/2})$, then 
$\sg\in L_m(b^{-1/2}, \R^d)$.

(ii) Let $0<\al'\leqslant1$ and $b>1$.  If $\law(Z)$
is strictly $\al'$-semistable with a span $b$, then 
$\sg$ is strictly $2\al'$-semistable with a span $b^{1/2}$.
\end{prop}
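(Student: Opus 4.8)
The plan is to deduce Proposition~\ref{prop2} directly from Theorem~\ref{thm2} by exhibiting $\sg$ as the subordinated distribution in a suitable subordination of cone-parameter convolution semigroups. First I would invoke Proposition~\ref{prop1}: write $\law(Z)=\rh_1$, where $\{\rh_t\colon t\in\R_+\}$ is an $\R_+$-parameter convolution semigroup on $\R^{d(d+1)/2}$ supported on $\mbf S_d^+$, and let $\{\sg_t\colon t\in\R_+\}$ be the subordination of the canonical $\mbf S_d^+$-parameter convolution semigroup $\{\mu_u\colon u\in\mbf S_d^+\}$ by $\{\rh_t\}$; then $\sg=\sg_1$. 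This puts us exactly in the setting of Theorem~\ref{thm2} with $K_1=\R_+$, $K_2=\mbf S_d^+$, and $s=1$.

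The key remaining point is to check the hypothesis of Theorem~\ref{thm2}, namely that each $\mu_u$ is strictly $\al$-semistable with a common span, for an appropriate value of $\al$. Here $\mu_u$ is $d$-dimensional centered Gaussian with covariance matrix $u$, so for any $b>1$ one has $\wh\mu_u(z)^{b}=\exp(-\tfrac{b}{2}z^\top u z)=\wh\mu_u(b^{1/2}z)$, i.e.\ $\mu_u$ is strictly $2$-stable, hence strictly $2$-semistable with span $b^{1/2}$ for \emph{every} $b>1$. Thus I take $\al=2$; given the $b>1$ in the hypothesis of Proposition~\ref{prop2}, each $\mu_u$ is strictly $2$-semistable with span $b^{1/2}$, which is precisely the condition ``$\mu_u$ strictly $\al$-semistable with span $b^{1/\al}$'' of Theorem~\ref{thm2}.

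With the hypothesis verified, assertion (i) of Proposition~\ref{prop2} follows from assertion (i) of Theorem~\ref{thm2}: if $\law(Z)=\rh_1\in L_m(b^{-1},\R^{d(d+1)/2})$ then $\sg=\sg_1\in L_m(b^{-1/2},\R^d)$, which is the claim since $b^{-1/\al}=b^{-1/2}$. Likewise, assertion (ii) follows from assertion (ii) of Theorem~\ref{thm2}: if $\law(Z)=\rh_1$ is strictly $\al'$-semistable with span $b$ (with $0<\al'\leqslant1$), then $\sg=\sg_1$ is strictly $\al\al'=2\al'$-semistable with span $b^{1/\al}=b^{1/2}$.

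I do not expect any genuine obstacle here: the proposition is essentially a specialization of Theorem~\ref{thm2} combined with Proposition~\ref{prop1}. The only mild subtlety worth spelling out is that the canonical Gaussian semigroup is strictly $2$-semistable with \emph{arbitrary} span, so the span $b$ appearing in the hypothesis on $\law(Z)$ can be matched by the subordinand for free; no compatibility condition between $b$ and the Gaussian structure is needed. One should also note in passing that Proposition~\ref{prop1} provides the semigroup $\{\rh_t\}$ realizing $\law(Z)$ as $\rh_1$, so membership of $\law(Z)$ in $L_m(b^{-1},\cdot)$ (or strict $\al'$-semistability with span $b$) is literally the hypothesis \eqref{3.3} (resp.\ \eqref{3.5}) of Theorem~\ref{thm2} at $s=1$.
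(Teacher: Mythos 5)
Your proposal is correct and is essentially the paper's own argument: the paper likewise proves Proposition~\ref{prop2} by combining Proposition~\ref{prop1} with Theorem~\ref{thm2}, using the observation that the canonical Gaussian $\mu_u$ is strictly $2$-stable and hence strictly $2$-semistable with span $b^{1/2}$ for every $b>1$. Your write-up merely spells out the details more explicitly.
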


\begin{proof} Recall that a distribution $\mu$ is strictly $\al$-stable
if and only if it is strictly $\al$-semistable with a span $b$ for all
$b>1$. Apply Theorem \ref{thm2} combined with Proposition \ref{prop1}.
\end{proof}

\bigskip

\end{document}